\newtheorem{proposition}{Proposition}
\newtheorem{remark}{Remark}
\newtheorem{definition}{Definition}
\newtheorem{theorem}{Theorem}
\renewcommand*{\backref}[1]{}
\renewcommand*{\backrefalt}[4]{%
    \ifcase #1 (Not cited.)%
    \or        (Cited on page~#2.)%
    \else      (Cited on pages~#2.)%
    \fi}
\newcommand{\N}{\mathbb{N}}
\newcommand{\R}{\mathbb{R}}
\newcommand{\Ker}{\mathrm{Ker}}
\title[Semi-biproducts of monoids]{Semi-biproducts of monoids}
\author{Nelson Martins-Ferreira}
\address[Nelson Martins-Ferreira]{Instituto Politécnico de Leiria, Leiria, Portugal}
\thanks{ }
\email{martins.ferreira@ipleiria.pt}
\begin{document}

\begin{abstract}
It is shown that the category of \emph{semi-biproducts} of monoids is equivalent to the category of \emph{pseudo-actions}. A semi-biproduct of monoids is a new notion, obtained through generalizing a biproduct of commutative monoids. By dropping commutativity and requiring some of the homomorphisms in the biproduct diagram to be merely identity-preserving maps, we obtain a semi-biproduct. A pseudo-action is a new notion as well. It consists of three ingredients: a pre-action, a factor system and a correction system. In the category of groups all correction systems are trivial. This is perhaps the reason why this notion, to the author's best knowledge, has never been considered before.
\end{abstract}

\keywords{Semi-biproduct, biproduct, semi-direct product, groups, monoids, pointed map, pseudo-action, correction factor, factor system, factor set, extension}


\maketitle

\section{Introduction}

In this paper, we introduce and study the notion of a semi-biproduct of monoids.
From a categorical point of view a biproduct is simultaneously a product and a coproduct (see \cite{MacLane}, p.194). Furthermore, when there is a coincidence between products and coproducts in a category, each hom-set carries the structure of a commutative monoid and parallel morphisms can be added. In such a context a biproduct can be defined as a diagram of the form
\begin{equation}
\label{diag: biproduct in additive cats}
\xymatrix{X\ar@<-.5ex>[r]_{k} & A\ar@<-.5ex>@{->}[l]_{q}\ar@<.5ex>[r]^{p} & B \ar@{->}@<.5ex>[l]^{s}}
\end{equation}
such that the following conditions hold:
\begin{eqnarray}
ps=1_B,\quad pk=0_{X,B},\label{eq: biproduct1}\\
qs=0_{B,X},\quad qk=1_{X},\label{eq: biproduct2}\\
kq+sp=1_A.\label{eq: biproduct3}
\end{eqnarray}
In spite of the fact that biproducts are 
classically studied in
 categories whose hom-sets are abelian groups (in which case the conditions $pk=0$ and $qs=0$ are derived) \cite{MacLane}, it is clear that
the concept of a biproduct is meaningful
 even when each hom-set is a commutative monoid. That seems to be the most general case in which it makes sense --- but we may ask the following question: how to generalize the definition
so that it can be applied to groups, where semi-direct products are expected to appear?
The present work gives an answer to this question
 and  it goes beyond groups. It provides a generalization to the so called \emph{Schreier extensions} of monoids. However, we answer the question at the expense of allowing consideration of functions between underlying sets of groups/monoids, and hence stepping out from the context of the category of groups/monoids. See  \cite{BournJanelidze} for a purely categorical approach to semi-direct products of groups.

A Schreier extension of monoids (see \cite{NMF et all}) can be seen as a sequence of monoid homomorphisms $$\xymatrix{X\ar[r]^{k}& A \ar[r]^{p} & B,}$$ in which $k=\ker(p)$, with the property that there exist two set-theoretical functions, $s\colon{B\to A}$ and $q\colon{A\to X}$, satisfying conditions $(\ref{eq: Sch-2})$--$(\ref{eq: Sch})$ below.
 Notice the similarity between these conditions and the conditions $(\ref{eq: biproduct1})$--$(\ref{eq: biproduct3})$ defining a biproduct. In particular, note that $(\ref{eq: Sch})$ is subsumed by $(\ref{eq: biproduct1})$ and $(\ref{eq: biproduct2})$ when $q$ is a homomorphism.
Thus, the notion of semi-biproduct of monoids arrives naturally as a diagram of the from $(\ref{diag: biproduct in additive cats})$ where $p$ and $k$ are monoid homomorphisms while $q$ and $s$ are zero-preserving maps, satisfying conditions $(\ref{eq: biproduct1})$--$(\ref{eq: biproduct3})$.

Let us see in more detail the long path that has given rise to this simple definition.
In  the category of groups and group homomorphisms, every split epimorphism
\begin{equation*}
\label{diag: split epi}
\xymatrix{ A\ar@<.5ex>[r]^{p} & B \ar@{->}@<.5ex>[l]^{s}},\quad ps=1_B
\end{equation*}
induces an exact sequence
\begin{equation*}
\label{diag: exact seq}
\xymatrix{ X\ar[r]^{k} & A\ar@<.0ex>[r]^{p} & B, }
\end{equation*}
with  $X=\Ker(p)$. If fixing $X$ in additive notation (without assuming that $X$ is an abelian group) and $B$ in multiplicative notation, then,   up to isomorphism, the group $A$ is recovered as a semi-direct product of the form $X\rtimes_{\varphi}B$, with neutral element $(0,1)\in X\times B$, and group operation
\begin{equation}
(x,b)+(x',b')=(x+\varphi_b(x'),bb').
\end{equation}
Recall that (see e.g. \cite{MacLane2} and its references to previous work) $$\varphi_b\colon{X\to X}$$ is defined, for every $b\in B$, as the map $\varphi_b(x)=q(s(b)+k(x))$, with $q(a)=a-sp(a)\in X$, for every $a\in A$. Moreover, when $p$ is a surjective homomorphism, but not necessarily a split epimorphism, it is still possible to recover the group $A$ as a semi-direct product  with a factor system, $X\rtimes_{\varphi,\gamma}B$. A factor system, $\gamma\colon{B\times B\to X}$, is any map $\gamma(b,b')=q(s(b)+s(b'))=s(b)+s(b')-s(bb')$, for some chosen set-theoretical section $s\colon{B\to A}$ of $p$, i.e., such that $ps=1_B$. In this case, the group operation in $X\rtimes_{\varphi,\gamma}B$ becomes
\begin{equation}
(x,b)+(x',b')=(x+\varphi_b(x')+\gamma(b,b'),bb').
\end{equation}
When $A$ is abelian, the action $\varphi_b(x)=s(b)+k(x)-s(b)\in X$ is trivial and so, the surjective homomorphisms with abelian domain are nothing but (symmetric) factor systems.

The case of monoids is different (see \cite{DB.NMF.AM.MS.13}). It is no longer true that every surjective monoid homomorphism $p\colon{A\to B}$ can be presented as a sequence
\begin{equation*}
\label{diag: exact seq mon}
\xymatrix{ X\ar[r]^(.34){\langle 1,0\rangle} & X\rtimes_{\varphi,\gamma}B \ar@<.0ex>[r]^(.64){\pi_B} & B }
\end{equation*}
with $X=\Ker(p)$ and such that the underlying set of $A$ is in bijection with the cartesian product $X\times B$. Thus, in monoids, the transport of the structure from  $A$ to $X\times B$ is not always possible. So, there is no hope of having an isomorphism $A\cong X\rtimes_{\varphi,\gamma}B$. 
A simple example to illustrate this is obtained as
\begin{equation}\label{eg:simple eg}
\xymatrix{\{1,a\}\ar[r]^{k}&\{1,a,b\}\ar[r]^{p}&\{1,b\}}
\end{equation}
with $k$ the inclusion map, $p(1)=p(a)=1$, $p(b)=b$ and $a^2=a$, $ab=ba=b^2=b$.

We may ask: which monoid homomorphisms exhibit a behaviour similar to the behaviour of surjective homomorphisms of groups? The answer turns out to be the so called  Schreier extensions (see \cite{Ganci, NMF et all} and references therein, see also \cite{DB.NMF.AM.MS.13}). 
A Schreier extension $p\colon{A\to B}$ is nothing but a monoid homomorphism  with the property that there exists a set-theoretical section $s\colon{B\to A}$ for which the function $X\times B\to A$, $(x,b)\mapsto k(x)+s(b)$ is a bijection of the underlying sets, with $k=\ker(p)$.
Equivalently, a Schreier extension can be seen as a monoid homomorphism $p\colon{A\to B}$, together with set-theoretical maps $s\colon{B\to A}$ and $q\colon{A\to X}$, with $X=\Ker(p)$ and $k=\ker(p)\colon{X\to A}$, such that
\begin{eqnarray}
ps(b)=b ,& b\in B,\label{eq: Sch-2}\\
kq(a)+sp(a)=a ,& a\in A,\\
q(k(x)+s(b))=x ,& x\in X, b\in B. \label{eq: Sch}
\end{eqnarray}

The key feature of this work is the observation that, at the expense of having to insert  a correction system, condition $(\ref{eq: Sch})$ can be discarded. The correction system controls the lack of condition $(\ref{eq: Sch})$ and will be denoted by $x^b$. It is defined as $x^b=q(k(x)+s(b))$, for every $x\in X$ and $b\in B$. While dropping condition $(\ref{eq: Sch})$, we lose the bijection between the underlying set of $A$ and the cartesian product of sets $X\times B$. However, if adding two simple conditions, namely $q(k(x))=x$ and $q(s(b))=0$, then, $A$ is bijective to a subset of the cartesian product $X\times B$. The subset consists of those pairs in  $X\times B$ that are of the form $(x^b,b)$  for some $x\in X$ and $b\in B$ (see Theorem \ref{thm: semi-biproducts}). The monoid operation is
\begin{equation}\label{eq:semi biproduct operation}
(x,b)+(x',b')=((x+b\cdot x'+(b\times b'))^{bb'},bb').
\end{equation}
In this way we see that it is possible to consider monoid extensions in which the middle object is not in bijection with the two ends, as it is the case in diagram (\ref{eg:simple eg}).

The correction system $x^b$ has to satisfy some conditions (Definition \ref{def: pseudo-action}). In the case of groups it becomes trivial, that is $x^b=x$. To see it, simply take $b'=b^{-1}$ in equation $(\ref{eq: 53})$. The notion of a correction system is inspired by the work of Leech and Wells on extending groups by monoids (\cite{Leech,Wells}). The fact that the correction system $\rho(x,b)=x^b$ is invisible in groups, hides the true nature of pseudo-actions  as a combination of structures $\rho$, $\varphi$,  $\gamma$ (Definition \ref{def: pseudo-action}) satisfying one major condition (\ref{eq: factor}). Only the particular traces of (\ref{eq: factor}) are familiar in groups. This is better explained in Remark \ref{remark}. A Schreier extension of monoids  is precisely a semi-biproduct with a trivial correction system, which explains 
the similarity between
 Schreier extensions of monoids and all extensions of groups. Examples of semi-biproducts of monoids that are not Schreier extensions are presented in Sections \ref{sec: eg} and \ref{sec:ext}. At this point we may observe that the sequence (\ref{eg:simple eg}), if completed with obvious section $s$ and retraction $q$, is an example of a semi-biproduct of monoids.
The celebrated surjective homomorphism with the usual addition of natural numbers,
\begin{equation*}
\xymatrix{\N\times \N \ar[r]^{+} & \N,}
\end{equation*}
whose kernel is trivial, illustrates the fact that not every surjective monoid homomorphism is part of a semi-biproduct diagram.

From a structural point of view, we are considering sequences of monoid homomorphisms
\begin{equation}
\xymatrix{X\ar[r]^{k} & A \ar[r]^{p} & B}
\end{equation} 
together with set-theoretical maps,  preserving the neutral element, but not necessarily preserving the monoid operation,
\begin{equation}
\xymatrix{X & A \ar[l]_{q} & B \ar[l]_{s} }
\end{equation} 
and satisfying the conditions $(\ref{eq: biproduct1})$--$(\ref{eq: biproduct3})$. When the maps $q$ and $s$ are monoid homomorphisms then this is exactly the definition of a biproduct. It is then natural to use the term \emph{semi-biproduct} when $q$ and $s$ are just zero-preserving maps.
 Particular instances when just one of $q$ or $s$ is a morphism are  worthwile studying, but we shall no longer expect a product or a coproduct to appear. In general it does not make sense to speak about the semi-biproduct of two objects, unless some kind of pseudo-action  is specified. This is no surprise, indeed, in groups, semi-biproducts are the same as semi-direct products with a factor system.  Connections with homology and cohomology are expected via several notions such as abstract kernel, obstruction, etc. (see for example \cite{NMF et all} and references there), 
 and perhaps the notion of 
  pseudo-action considered here (Definition \ref{def: pseudo-action}) can be used as a tool and clarify some classical  interpretations in group-cohomology.

This paper is organized as follows. In section \ref{sec:def and thms}  we give the definition of semi-biproduct and pseudo-action and establish a categorical equivalence between them (Theorem \ref{thm: equivalence}). In Section \ref{sec: eg}  we give examples of semi-biproducts of monoids obtained from Schreier extensions and other types of more general extensions. In Section \ref{sec:ext} we explore the fact that a semi-biproduct may be viewed as an extension of monoids with a specified section map and a specified retraction map.

\section{Definitions and theorems}\label{sec:def and thms}

In this section, we introduce a new notion of \emph{pseudo-action} of mo\-noids, which enables a characterization of semi-biproducts of monoids, as defined in the Introduction. Some previous works have been done in a similar direction but the emphasis has always been on the side of (Schreier) extensions and their classification (e.g. \cite{ NMF14,Fleischer,Ganci,NMF et all}). Here we consider a semi-biproduct as a mathematical object rather than something which arises from an extension with appropriate choices for a section and a retraction. In a certain sense this work goes in the direction of  \cite{GranJanelidzeSobral}. There, the Schreier condition $(\ref{eq: Sch})$ is still being considered, but it shows that it is possible to work in the wider context of unitary magmas. Another recent study on (weakly) Schreier extensions can be found in \cite{Faul}. There, up to some differences concerning what is structure and what is property, the main focus is on a certain quotient, while here we have put our emphasis on a certain subset of the cartesian product between the two ends of a semi-biproduct diagram.

\subsection{Semi-biproducts of monoids} 

A \emph{semi-biproduct} of monoids is
 a diagram of the shape 
\begin{equation}
\label{diag: biproduct}
\xymatrix{X\ar@<-.5ex>[r]_{k} & A\ar@<-.5ex>@{->}[l]_{q}\ar@<.5ex>[r]^{p} & B \ar@{->}@<.5ex>[l]^{s}}
\end{equation}
such that $p$, $k$, are monoid homomorphisms, while  $q$ and $s$ are zero-preserving maps, verifying the conditions
\begin{eqnarray}
ps&=&1_B\\
qk&=&1_X\\
kq+sp&=&1_A\\
pk&=&0_{X,B}\\
qs&=&0_{B,X}.
\end{eqnarray}
  We will work with  additive notation on $X$ and   multiplicative notation on $B$, but neither is assumed to be commutative.

\begin{theorem}\label{thm: semi-biproducts}
Let $(X,A,B,p,k,q,s)$ be a semi-biproduct of monoids. If we put for every $b,b'\in B$ and $x\in X$, 
\begin{eqnarray}
b\cdot x=q(s(b)+k(x))\\
x^b=q(k(x)+s(b))\\
b\times b'=q(s(b)+s(b'))
\end{eqnarray}
then, for every $a,a'\in A$
\begin{equation}\label{eq: a+a'}
a+a'=k(q(a)+p(a)\cdot q(a')+(p(a)\times p(a')))+sp(a+a')
\end{equation}
and 
\begin{equation}\label{eq: kuv+sv}
a+a'=k(u^v)+s(v)
\end{equation}
with $u=q(a)+p(a)\cdot q(a')+(p(a)\times p(a'))\in X$ and $v=p(a+a')\in B$.

Moreover, the map $\beta\colon{A\to X\times B}$ with $\beta(a)=(q(a),p(a))$ is always injective. An element $(y,b)\in X\times B$ is in the image of $\beta$ if and only if  $y=x^b$ for some $x\in X$.

This means that $A$ is in bijection with the set $\{(x^b,b)\mid (x,b)\in X\times B\}$ with the inverse map for $\beta$ being $\alpha(x^b,b)=k(x^b)+s(b)=k(x)+s(b)$. 

Furthermore, for every $a\in A$, $q(a)=q(a)^{p(a)}$.
\end{theorem}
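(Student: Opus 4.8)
The plan is to treat the identity $kq+sp=1_A$ as the engine of the whole proof: it says that every $a\in A$ admits the canonical decomposition $a=k(q(a))+s(p(a))$, and together with the injectivity of $k$ (which follows from $qk=1_X$) it reduces every claimed equality in $A$ to an equality in $X$ once the $s$-component has been matched. The one point requiring vigilance throughout is that $q$ is merely zero-preserving, not a homomorphism, so $q$ may never be distributed over a sum directly; every such manipulation must instead be routed through the canonical decomposition and through the three defining formulas for $b\cdot x$, $x^b$ and $b\times b'$.

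First I would establish two structural identities. Applying the canonical decomposition to the element $s(b)+k(x')$, whose components are $p(s(b)+k(x'))=b$ (using $ps=1_B$ and $pk=0$) and $q(s(b)+k(x'))=b\cdot x'$ (the definition), I obtain the swap identity
\begin{equation*}
s(b)+k(x')=k(b\cdot x')+s(b).
\end{equation*}
Treating $s(b)+s(b')$ the same way, with $p(s(b)+s(b'))=bb'$ and $q(s(b)+s(b'))=b\times b'$, gives the factor identity
\begin{equation*}
s(b)+s(b')=k(b\times b')+s(bb').
\end{equation*}
Then I would write $a=k(x)+s(b)$ and $a'=k(x')+s(b')$ with $x=q(a)$, $b=p(a)$, $x'=q(a')$, $b'=p(a')$, and compute $a+a'=k(x)+s(b)+k(x')+s(b')$ by applying the swap identity to the middle pair and then the factor identity, using that $k$ is a homomorphism to fuse the three resulting $k$-terms into one. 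Since $bb'=p(a)p(a')=p(a+a')$, this is exactly $(\ref{eq: a+a'})$. This chain of two substitutions is the main step of the argument.

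For $(\ref{eq: kuv+sv})$ I would apply $q$ to the equation $a+a'=k(u)+s(v)$ just obtained, with $u$ and $v$ as stated, and read off $q(a+a')=q(k(u)+s(v))=u^v$ straight from the definition of $(-)^{(-)}$; feeding this back into the canonical decomposition $a+a'=k(q(a+a'))+s(p(a+a'))$ and using $p(a+a')=v$ gives $a+a'=k(u^v)+s(v)$. Injectivity of $\beta$ is then immediate, since $q(a)=q(a')$ and $p(a)=p(a')$ force $a=k(q(a))+s(p(a))=k(q(a'))+s(p(a'))=a'$. The final ``furthermore'' identity is equally direct, since by definition $q(a)^{p(a)}=q(k(q(a))+s(p(a)))=q(a)$.

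It remains to identify the image of $\beta$ and its inverse $\alpha$. For the forward inclusion, if $(y,b)=\beta(a)$ then the furthermore identity gives $y=q(a)=q(a)^{p(a)}$, so $y=x^b$ with $x=q(a)$; conversely, given $y=x^b$ I would exhibit the witness $a=k(x)+s(b)$, whose components are $p(a)=b$ and $q(a)=q(k(x)+s(b))=x^b=y$. Together with injectivity this shows $\beta$ is a bijection onto $\{(x^b,b)\mid(x,b)\in X\times B\}$. The one delicate point is the well-definedness and the stated formula for $\alpha$: I must check $k(x^b)+s(b)=k(x)+s(b)$, which follows once more from the canonical decomposition applied to $a=k(x)+s(b)$, namely $a=k(q(a))+s(p(a))=k(x^b)+s(b)$. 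Verifying $\alpha\beta=1_A$ and $\beta\alpha=1$ on the image then reduces to $kq(a)+sp(a)=a$ and to $(x^b)^b=x^b$, the latter being yet another instance of the same decomposition. I expect this well-definedness check for $\alpha$ to be the only place where genuine care is needed, precisely because $q$ is not injective and distinct $x$ may share the same value $x^b$.
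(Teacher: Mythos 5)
Your proposal is correct and takes essentially the same route as the paper: everything runs on the canonical decomposition $a=kq(a)+sp(a)$ together with the defining formulas for $b\cdot x$, $x^b$ and $b\times b'$, and your preliminary ``swap'' and ``factor'' identities $s(b)+k(x)=k(b\cdot x)+s(b)$ and $s(b)+s(b')=k(b\times b')+s(bb')$ are precisely the rewriting steps the paper performs inline in its chain of equalities (and records explicitly as (\ref{eq:kxsb2})--(\ref{eq:kxsb3}) in the proof of Theorem~\ref{thm: pseudo-actions}). The remaining items---injectivity of $\beta$, the identity $q(a)=q(a)^{p(a)}$, the characterization of the image, and the well-definedness check for $\alpha$ (which you rightly flag, since distinct $x$ may share the value $x^b$)---match the paper's argument step for step.
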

\begin{proof}
We observe:
\begin{eqnarray*}
a+a'&=& kqa+(spa+kqa')+spa'  \qquad(kq+sp=1)\\
    &=& kqa+kq(spa+kqa')+sp(spa+kqa')+spa'  \\
    &=& kqa+kq(spa+kqa')+spa+spa'  \qquad( ps=1,pk=0)\\
        &=&  kqa+kq(spa+kqa')+kq(spa+spa')+sp(spa+spa')\\
    &=&  kqa+kq(spa+kqa')+kq(spa+spa')+s(pa+pa')\\
       &=& k(qa+q(spa+kqa')+q(spa+spa'))+sp(a+a') \\
       &=& k(q(a)+p(a)\cdot q(a')+(p(a)\times p(a')))+sp(a+a').
\end{eqnarray*}
We observe further that for every $a\in A$, $x\in X$, $b\in B$, if $a=k(x)+s(b)$ then $a=k(x^b)+s(b)$, which proves condition $(\ref{eq: kuv+sv})$. This follows from the fact that for every $a\in A$, we have $q(a)=q(a)^{p(a)}$. Indeed, $q(a)=q(kq(a)+sp(a))$ follows directly from $a=kq(a)+sp(a)$.

If $\beta(a)=\beta(a')$ then $q(a)=q(a')$ and $p(a)=p(a')$, hence $a=a'$. This proves $\beta$ is injective. 

If $(x,b)\in X\times B$ is of the form $x=q(a)$, $b=p(a)$ for some $a\in A$, then $x=x^b$ because $q(a)=q(a)^{p(a)}$ as already shown. If $(y,b)\in X\times B$ is of the form $y=x^b$ for some $x\in X$, then there exists $a=k(x)+s(b)\in A$ such that $\beta(a)=(x,b)$. In order to see that the map $\alpha(x^b,b)=k(x)+s(b)$ is the inverse of $\beta$ we observe:
\[(x^b,b)\mapsto k(x)+s(b)\mapsto (q(k(x)+s(b)),p(k(x)+s(b))=(x^b,b)\]
and
\[a\mapsto (q(a),p(a))=(q(a)^{p(a)},p(a))\mapsto kq(a)+sp(a)=a.\]
\end{proof} 

As expected, a morphism between semi-biproducts is a diagram of the shape
\begin{equation}\label{diag:morphism of semi-biproduct}
\xymatrix{X\ar@<-.5ex>[r]_{k}\ar@{->}[d]_{f_1} & A\ar@{->}@<0ex>[d]^{f_2}\ar@<-.5ex>@{-->}[l]_{q}\ar@<.5ex>[r]^{p} & B\ar@{->}[d]^{f_3} \ar@{->}@<.5ex>@{-->}[l]^{s}\\
X'\ar@<-.5ex>[r]_{k'} & A' \ar@<-.5ex>@{-->}[l]_{q'}\ar@<.5ex>[r]^{p'} & B' \ar@{->}@<.5ex>@{-->}[l]^{s'}}
\end{equation}
in which the top and bottom rows are semi-biproducts, each $f_i$, $i=1,2,3$ is a homomorphism and the following compatibility conditions hold: $f_2k=k'f_1$, $p'f_2=f_3p$, $f_2s=s'f_3$, $q'f_2=f_1q$.

We will show that there is an equivalence of categories between the category of semi-biproducts of monoids and a suitable category of pseudo-actions of monoids.

\subsection{Pseudo-actions of monoids} Let $X$ and $B$ be two monoids. For convenience, let us again use additive notation for $X$ and multiplicative notation for $B$. Note that neither one is assumed to be commutative.

\begin{definition}\label{def: pseudo-action}
A pseudo-action of $B$ on $X$ consists in three different components:
\begin{enumerate}
\item a correction system, that is, a map
\begin{eqnarray*}
\rho\colon{X\times B\to X};\quad 
(x,b)\mapsto x^b
\end{eqnarray*}
such that for all $x\in X$ and $b\in B$
\begin{eqnarray}
x^1=x,\quad 0^b=0,
\end{eqnarray}

\item a pre-action, that is, a map
\[\varphi\colon{B\times X\to X};\quad (b,x)\mapsto b\cdot x\] such that for all $b\in B$ and $x\in X$
\begin{eqnarray}
1\cdot x=x,\quad b\cdot 0=0,
\end{eqnarray}

\item a factor system, that is, a map
\begin{eqnarray*}
\gamma\colon{B\times B\to X};\quad 
(b,b')\mapsto b\times b'
\end{eqnarray*}
such that for every $b\in B$
\begin{eqnarray}
1\times b=0=b\times 1.
\end{eqnarray}
\end{enumerate}
The three components are related via one major condition, $(\ref{eq: factor})$, which must hold for every $x,x',x''\in X$ and $b,b',b''\in B$,
\begin{eqnarray}
(x+b\cdot((x'+b'\cdot x'' + (b'\times b''))^{b'b''})+(b\times b'b''))^{bb'b''}=\nonumber\\
=((x+b\cdot x'+(b\times b'))^{bb'}+bb'\cdot x''+(bb'\times b''))^{bb'b''}.\label{eq: factor}
\end{eqnarray}
Moreover, the following two compatibility conditions 
\begin{eqnarray}
(b\cdot x)^b=b\cdot x\label{eq: b.x^b}\\
(b\times b')^{bb'}=b\times b'\label{eq: bxb'^bb'}
\end{eqnarray}
are required for every $x\in X$ and $b,b'\in B$.
\end{definition}

The correction system $\rho$ is used to correct the fact that in general $b\cdot(x+y)$ is not  equal to $b\cdot x+b\cdot y$. Instead, we have the equalities
 \begin{eqnarray}
(b\cdot(x+y))^b=(b\cdot x+b\cdot y)^b\label{eq: correction1}\\
(x+y)^b=(x+y^b)^b \label{eq: correction2}\\
(x^{b}+b\cdot y)^{b}=(x+(b\cdot y)^b)^b \label{eq: correction3}
\end{eqnarray}
which are obtained (Remark \ref{remark} ---  items \ref{item4}, \ref{item5}, \ref{item6} below) as particular cases of $(\ref{eq: factor})$. The fact that the correction system is invisible in groups (Remark \ref{remark} --- item \ref{item2} below) is perhaps the reason why (to the author's best knowledge) it has not appeared before as an explicit structure.  The idea of having one major condition (\ref{eq: factor}) accounting for associativity has been  implicitly used by Leech \cite{Leech} and Wells \cite{Wells} (see also \cite{Fleischer}). The two conditions (\ref{eq: b.x^b}) and (\ref{eq: bxb'^bb'}) are needed in order to have an  equivalence  with semi-biproducts (Theorem \ref{thm: equivalence}).

\begin{remark}\label{remark}
The following particular cases of $(\ref{eq: factor})$ are of interest:
\begin{enumerate}
\item Taking $x,x',x''$ to be zero, we get
\begin{equation}\label{eq: factsystem}
(b\cdot (b'\times b'')^{bb'}+(b\times b'b''))^{bb'b''}=
((b\times b')^{bb'}+(bb'\times b''))^{bb'b''}
\end{equation}
which, if we ignore the correction system, becomes the usual formula for a factor system of monoids (see e.g. \cite{Ganci} and references therein).

\item\label{item2} Taking $x=x''=0$ and $b=1$, we get (by putting again $x$ in the place of $x'$, $b$ in the place of $b'$ and $b'$ in the place of $b''$ for readability)
\begin{equation}\label{eq: 53}
(x^{b}+b\times b')^{bb'}=(x+b\times b')^{bb'}
\end{equation}
which explains how different the correction system is from being an action. In particular, if $X$ is right cancellable and $B$ is a  group then the correction system is always trivial. Indeed, if we take $b'=b^{-1}$ we obtain $x^b+(b\times b^{-1})=x + (b\times b^{-1})$ and hence, cancelling out $b\times b^{-1}$, we get $x^b=x$.

\item Taking $x=x'=0$ and $b''=1$, we get
\begin{equation}\label{eq: factsystemconj}
(b\cdot(b'\cdot x'')^{b'}+(b\times b'))^{bb'}=((b\times b')^{bb'}+bb'\cdot x'')^{bb'}
\end{equation} 
which, in groups, becomes the familiar expression
\begin{equation}
b\cdot(b'\cdot x'')=(b\times b')+bb'\cdot x''-(b\times b')
\end{equation} 
stating that the factor system $b\times b'$ measures, by conjugation, the distance between a pre-action $\varphi$ and an ordinary action.

\item\label{item4} Taking $b=b'=1$ and $x''=0$, we get
\begin{equation}
(x+x')^{b''}=(x+x'^{b''})^{b''}
\end{equation}
which is exactly the same as $(\ref{eq: correction2})$.

\item\label{item5} Taking $b=b''=1$ and $x'=0$, we get
\begin{equation}
(x^{b'}+b'\cdot x'')^{b'}=(x+(b'\cdot x'')^{b'})^{b'}
\end{equation}
which is exactly the same as $(\ref{eq: correction3})$.

\item\label{item6} Taking $b'=b''=1$ and $x=0$, we get
\begin{equation}
(b\cdot (x'+x''))^b=((b\cdot x')^b+b\cdot x'')^b
\end{equation}
which, if combined with $(\ref{eq: correction2})$ and $(\ref{eq: correction3})$, gives $(\ref{eq: correction1})$.

\end{enumerate}
\end{remark}

Given a pseudo-action $(X,B,\rho,\varphi,\gamma)$  we construct (inspired by \cite{Wells}) a \emph{synthetic diagram} 
\begin{equation}\label{diag:synthetic semi-biproduct}
\xymatrix{X\ar@<-.5ex>[r]_(.4){\langle 1,0\rangle} & R_{\rho,\varphi,\gamma}\ar@<-.5ex>@{->}[l]_(.55){\pi_X}\ar@<.5ex>[r]^(.6){\pi_B} & B \ar@{->}@<.5ex>[l]^(.35){\langle 0,1\rangle}}
\end{equation}
with $R_{\rho,\varphi,\gamma}=\{(x,b)\in X\times B\mid x^b=x\}$, equipped with the restriction to the binary operation
\begin{equation}\label{eq: semibiproduct sunthetic operation}
(x,b)+(x',b')=((x+b\cdot x'+(b\times b'))^{bb'},bb')
\end{equation}
defined for every $x,x'\in X$ and $b,b'\in B$.

Let $(X,B,\rho,\varphi,\gamma)$ and $(X',B',\rho',\varphi',\gamma')$ be two pseudo-actions of monoids. A morphism of pseudo-actions is a pair $(f,g)$ of monoid homomorphisms, $f\colon{X\to X'}$, $g\colon{B\to B'}$ preserving each one of the three components of a pseudo-action,
\begin{eqnarray}
f(x^b)&=&f(x)^{g(b)},\\
f(b\cdot x)&=&g(b)\cdot {f(x)},\\
f(b\times b')&=&g(b)\times g(b').
\end{eqnarray}
The equation
\begin{equation}\label{eq: homomprhism conditon}
f((x+b\cdot x'+b\times b')^{bb'})=(f(x)+g(b)\cdot f(x')+ g(b)\times g(b'))^{g(bb')}
\end{equation}
 is a consequence of the previous ones and will be needed in proving that the map ${R_{\rho,\varphi,\gamma}\to R_{\rho',\varphi',\gamma'}}$, defined for every pair $(x,b)\in X\times B$, with $x^b=x$, as $(x,b)\mapsto (f(x),g(b))$, is a monoid homomorphism.

\subsection{The equivalence}

In this subsection we will work towards an equivalence between pseudo-actions and semi-biproducts of monoids.
First we show that there is a functor from the category of semi-bipro\-ducts into the category of pseudo-actions.

\begin{theorem}\label{thm: pseudo-actions}
Let $(X,A,B,p,k,q,s)$ be a semi-biproduct of monoids. The system with  three components
\begin{eqnarray}
x^b=q(k(x)+s(b))\\
b\cdot x=q(s(b)+k(x))\\
b\times b'=q(s(b)+s(b'))
\end{eqnarray}
is a pseudo-action from $B$ into $X$.

Moreover, if $(f_1,f_2,f_3)$ is a morphism of semi-biproducts such as the one displayed in (\ref{diag:morphism of semi-biproduct}) then the pair $(f_1,f_3)$ is a morphism of pseudo-actions.
\end{theorem}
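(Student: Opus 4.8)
The plan is to verify that the three component maps defined from the semi-biproduct data satisfy all the axioms in Definition~\ref{def: pseudo-action}, and then to check the compatibility conditions that make $(f_1,f_3)$ a morphism of pseudo-actions. First I would dispose of the normalization identities. Each of $x^1$, $0^b$, $1\cdot x$, $b\cdot 0$, $1\times b$, $b\times 1$ unwinds directly from the definitions together with the fact that $s$ and $q$ are zero-preserving and that $s(1)=0$, $k(0)=0$; for instance $x^1=q(k(x)+s(1))=q(k(x))=qk(x)=x$ by $qk=1_X$, and $0^b=q(k(0)+s(b))=q(s(b))=qs(b)=0$ by $qs=0_{B,X}$. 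The remaining five normalizations are of the same flavour and I would present them compactly rather than line by line.

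Next I would prove the two compatibility conditions $(\ref{eq: b.x^b})$ and $(\ref{eq: bxb'^bb'})$, since these are lighter than the main identity and share a common mechanism: an element of the form $a=s(b)+k(x)$ or $a=s(b)+s(b')$ has $q(a)=q(a)^{p(a)}$ by the last assertion of Theorem~\ref{thm: semi-biproducts}. Concretely, for $(\ref{eq: b.x^b})$ put $a=s(b)+k(x)$, so $q(a)=b\cdot x$ and $p(a)=p s(b)+p k(x)=b\cdot 1=b$ using $ps=1_B$ and $pk=0_{X,B}$; then $q(a)=q(a)^{p(a)}$ reads exactly $(b\cdot x)=(b\cdot x)^b$. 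The identity $(\ref{eq: bxb'^bb'})$ is obtained the same way with $a=s(b)+s(b')$, where $p(a)=bb'$ and $q(a)=b\times b'$.

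The main obstacle is the major condition $(\ref{eq: factor})$, and here the strategy is to exploit equation $(\ref{eq: a+a'})$ of Theorem~\ref{thm: semi-biproducts}, which expresses the $q$-component of a sum in $A$ precisely through the combination $q(a)+p(a)\cdot q(a')+(p(a)\times p(a'))$ followed by correction. I would compute $q$ of the triple sum $s(b)+s(b')+s(b'')$ (with inserted $k(x),k(x'),k(x'')$ terms) in two ways, associating as $(s(b)+\cdots)+\cdots$ versus $s(b)+(\cdots+\cdots)$; associativity of $+$ in $A$ forces the two resulting expressions for the $q$-component to coincide. Applying $(\ref{eq: a+a'})$ twice, once for each bracketing, and reading off the $X$-coordinate via $q$, should yield exactly the two sides of $(\ref{eq: factor})$ after identifying $q(s(b)+k(x))=b\cdot x$, $q(k(x)+s(b))=x^b$, and $q(s(b)+s(b'))=b\times b'$. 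The delicate bookkeeping is in matching the $p$-images of the intermediate elements so that every superscript and every $\varphi$/$\gamma$ argument lands correctly; I expect that $ps=1_B$ and $pk=0_{X,B}$ collapse all the $p$-images to the expected products $bb'$, $bb'b''$, etc., which is what makes the correspondence exact.

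Finally, for the morphism statement, I would use the compatibility equations $f_2k=k'f_1$, $p'f_2=f_3p$, $f_2s=s'f_3$, $q'f_2=f_1q$ to transport each component. For example $f_1(x^b)=f_1 q(k(x)+s(b))=q' f_2(k(x)+s(b))=q'(k'f_1(x)+s'f_3(b))=f_1(x)^{f_3(b)}$, where the middle equality uses $f_1 q=q' f_2$ and that $f_2$ is a homomorphism so it commutes with $+$, and the last step is the definition of the primed correction system. The preservation of $\varphi$ and $\gamma$ follows by the identical substitution pattern, so $(f_1,f_3)$ preserves all three components and is therefore a morphism of pseudo-actions.
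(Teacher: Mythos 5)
Your overall route is the paper's: the normalizations are immediate from $qk=1_X$, $qs=0$ and zero-preservation; the morphism part is verified exactly as you describe via $q'f_2=f_1q$ and the homomorphism property of $f_2$; and the two compatibility conditions follow from $q(a)=q(a)^{p(a)}$ applied to $a=s(b)+k(x)$ and $a=s(b)+s(b')$ (the paper gets the same thing by applying $q$ to the identities $sb+kx=k(b\cdot x)+s(b)$ and $sb+sb'=k(b\times b')+s(bb')$, which is the same observation). The idea for the major condition --- two bracketings of $kx+sb+kx'+sb'+kx''+sb''$ forced equal by associativity of $A$ --- is also the paper's.

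The one genuine problem is your claim that iterating $(\ref{eq: a+a'})$ ``yields exactly the two sides of $(\ref{eq: factor})$.'' It does not: $(\ref{eq: a+a'})$ returns the $q$-image of each intermediate element, and that image already carries a correction exponent. For instance $q(sb+kx')=(b\cdot x')^{b}$, not $b\cdot x'$, and then $q\bigl((sb+kx')+sb'\bigr)=\bigl((b\cdot x')^{b}+(b\times b')\bigr)^{bb'}$, whereas the right-hand side of $(\ref{eq: factor})$ contains $(x+b\cdot x'+(b\times b'))^{bb'}$ with no inner exponent. The expressions you obtain are equal to the stated ones (both compute $q$ of the same element of $A$), but proving that syntactic reconciliation amounts to re-deriving identities such as $(\ref{eq: correction2})$--$(\ref{eq: correction3})$, which is extra work your sketch does not account for. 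The repair is to work at the level of elements of $A$ rather than their $q$-images: keep each partial sum in the canonical form $k(u)+s(v)$ using the three rewriting identities $kx+sb=k(x^b)+s(b)$, $sb+kx=k(b\cdot x)+s(b)$, $sb+sb'=k(b\times b')+s(bb')$ (all consequences of $kq+sp=1_A$, $ps=1_B$, $pk=0$), inserting the correction exponent only at the single step where the target formula has it, and apply $q$ just once at the very end. This is precisely how the paper's proof of Theorem~\ref{thm: pseudo-actions} is organized, and with that substitution your argument goes through.
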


\begin{proof}
Clearly, the formulas given for $x^b$, $b\cdot x$ and $b\times b'$ are well defined maps $\rho$, $\varphi$ and $\gamma$ consistent with the three components of a pseudo-action, respectively a correction system, a pre-action and a factor system. The identities $x^1=x$, $0^b=0$, $1\cdot x=x$, $b\cdot 0=0$, $b\times 1=0=1\times b$ are immediate consequences of $qs=0_{X,B}$, $qk=1_X$, and the fact that the maps $s$ and $q$ preserve the neutral element.

In order to prove condition (\ref{eq: factor}) we observe that for every $x,x',x''\in X$ and $b,b',b''\in B$, the expression
\[kx+sb+kx'+sb'+kx''+sb''\in A\] 
can be decomposed as
\begin{equation}\label{eq:associative1}
\left(\left(kx+\left(\left(sb+kx'\right)+sb'\right)\right)+kx''\right)+sb''
\end{equation}
and as
\begin{equation}\label{eq:associative2}
kx+\left(sb+\left(kx'+\left(\left(sb'+kx''\right)+sb''\right)\right)\right).
\end{equation}
Using the formulas
\begin{eqnarray}
kx+sb=k(x^b)+s(b)\label{eq:kxsb1}\\
sb+kx=k(b\cdot x)+s(b)\label{eq:kxsb2}\\
sb+sb'=k(b\times b')+s(bb')\label{eq:kxsb3}
\end{eqnarray}
when appropriate and following the order of parenthesis indicated in (\ref{eq:associative1}) we obtain the right hand side of (\ref{eq: factor}). Following the order of parenthesis indicated in (\ref{eq:associative2}) we obtain the left hand side of (\ref{eq: factor}).

The condition $(x^b)^b=x^b$ (which is also a consequence of (\ref{eq: factor})) as well as the two conditions (\ref{eq: b.x^b}) and (\ref{eq: bxb'^bb'}) are obtained by applying the map $q$ on both sides of equations (\ref{eq:kxsb1})--(\ref{eq:kxsb3}).

Let us now suppose $(f_1,f_2,f_3)$ is a morphism of semi-biproducts such as the one displayed in (\ref{diag:morphism of semi-biproduct}). We have to show that the pair $(f_1,f_3)$ is a morphism of pseudo-actions.  To simplify notation let us consider $f=f_1$, $g=f_3$ and $h=f_2$. We observe,
\begin{eqnarray*}
f(x)^{g(b)}&=&q'(k'f(x)+s'g(b))\\
&=&q'(hk(x)+hs(b))\\
&=&q'h(k(x)+s(b))\\
&=&fq(k(x)+s(b))\\
&=&f(x^b).
\end{eqnarray*}
Similarly, we prove
\[f(b\cdot x)=fq(sb+kx)=q'h(sb+kx)=q'(sg(b)+k'f(x))=g(b)\cdot f(x)\]
and
\[f(b\times b')=fq(sb+sb')=q'h(sb+sb')=q'(s'g(b)+s'g(b'))=g(b)\times g(b').\]
\end{proof}


The previous result gives a functor from the category of semi-bipro\-ducts into the category of pseudo-actions. The synthetic construction (\ref{diag:synthetic semi-biproduct}) produces a functor in the other direction.

\begin{theorem}\label{thm: pseudo-actions to semi-biproducts}
Let $(X,B,\rho,\varphi,\gamma)$ be a pseudo-action of monoids, the synthetic diagram (\ref{diag:synthetic semi-biproduct}) is a semi-biproduct of monoids.
Moreover, if $$(f,g)\colon{(X,B,\rho,\varphi,\gamma)\to (X',B',\rho',\varphi',\gamma')}$$ is a morphism of pseudo-actions then the triple $(f,h,g)$, with  $$h\colon{R_{\rho,\varphi,\gamma}\to R_{\rho',\varphi',\gamma'}},$$ defined as $h(x,b)=(f(x),g(b))$, is a morphism of semi-biproducts.
\end{theorem}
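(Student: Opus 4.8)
The plan is to verify first that the synthetic diagram (\ref{diag:synthetic semi-biproduct}) is a semi-biproduct, and then that the triple $(f,h,g)$ satisfies the four compatibility squares of a morphism. The crucial preliminary is that the operation (\ref{eq: semibiproduct sunthetic operation}) is well defined on $R_{\rho,\varphi,\gamma}$, i.e. that the pair it produces again lies in $R_{\rho,\varphi,\gamma}$. Writing $w=x+b\cdot x'+(b\times b')$ and $c=bb'$, this amounts to the idempotency $(w^{c})^{c}=w^{c}$, which I would extract from (\ref{eq: factor}) by specializing $x=x'=0$ and $b=b'=1$ (then renaming $x''$, $b''$): the left-hand side collapses to $(y^{c})^{c}$ and the right-hand side to $y^{c}$. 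With closure in hand I would check that $(0,1)$ is a two-sided unit, using $1\cdot x=x$, $b\cdot 0=0$, $1\times b=0=b\times 1$ and $x^{1}=x$, together with the defining constraint $x^{b}=x$ carried by every element $(x,b)\in R_{\rho,\varphi,\gamma}$; for instance $(x,b)+(0,1)=(x^{b},b)=(x,b)$.

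The heart of the argument is associativity, and the key observation is that associativity of (\ref{eq: semibiproduct sunthetic operation}) \emph{is} exactly condition (\ref{eq: factor}). Concretely, I would expand $((x,b)+(x',b'))+(x'',b'')$ and read off that its first coordinate is precisely the right-hand side of (\ref{eq: factor}), then expand $(x,b)+((x',b')+(x'',b''))$ and read off that its first coordinate is precisely the left-hand side; the second coordinates agree because $B$ is associative. Thus (\ref{eq: factor}) delivers associativity with no further computation, and $R_{\rho,\varphi,\gamma}$ is a monoid. I would then record that $\langle 1,0\rangle$ and $\pi_B$ are homomorphisms landing in the right places ($(x,1)\in R_{\rho,\varphi,\gamma}$ since $x^{1}=x$, and $(0,b)\in R_{\rho,\varphi,\gamma}$ since $0^{b}=0$), while $\pi_X$ and $\langle 0,1\rangle$ are evidently zero-preserving.

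It then remains to verify the five semi-biproduct identities. Four of them are immediate from the projection/insertion formulas; the only substantive one is $\langle 1,0\rangle\pi_X+\langle 0,1\rangle\pi_B=1$, for which I would compute $(x,1)+(0,b)=(x^{b},b)$ and note this equals $(x,b)$ precisely because $x^{b}=x$ holds on $R_{\rho,\varphi,\gamma}$. This completes the first assertion. Note that conditions (\ref{eq: b.x^b}) and (\ref{eq: bxb'^bb'}) are \emph{not} needed here; their role is in the round-trip establishing the equivalence, not in this theorem.

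For the morphism statement, I would first check that $h$ is well defined, i.e. maps $R_{\rho,\varphi,\gamma}$ into $R_{\rho',\varphi',\gamma'}$: if $x^{b}=x$ then $f(x)^{g(b)}=f(x^{b})=f(x)$, since $(f,g)$ preserves the correction system. That $h$ is a homomorphism then follows by comparing $h((x,b)+(x',b'))$ with $h(x,b)+h(x',b')$: the first coordinates coincide by the consequence (\ref{eq: homomprhism conditon}) of the three preservation identities, the second coordinates coincide because $g$ is multiplicative, and $h(0,1)=(0,1)$ since $f,g$ preserve units. Finally the four squares $hk=k'f$, $p'h=gp$, $hs=s'g$, $q'h=fq$ are all immediate evaluations, e.g. $h\langle 1,0\rangle(x)=(f(x),g(1))=(f(x),1)=\langle 1,0\rangle f(x)$ using $g(1)=1$, and likewise for the others. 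The main obstacle throughout is purely organizational: correctly aligning the expanded brackets of the synthetic operation with the two sides of (\ref{eq: factor}); once that identification is made, every remaining step is a one-line substitution of the pseudo-action axioms.
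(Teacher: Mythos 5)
Your proposal is correct and takes essentially the same route as the paper's proof: closure and associativity of the synthetic operation are read off directly from condition (\ref{eq: factor}), the identity $kq+sp=1$ reduces to $(x,1)+(0,b)=(x^b,b)$ together with $x^b=x$ on $R_{\rho,\varphi,\gamma}$, and the morphism statement rests on (\ref{eq: homomprhism conditon}). Your explicit specialization of (\ref{eq: factor}) giving the idempotency $(y^c)^c=y^c$, and your observation that (\ref{eq: b.x^b}) and (\ref{eq: bxb'^bb'}) are only needed for the round-trip in the equivalence, agree with what the paper asserts elsewhere.
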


\begin{proof}
 Given a pseudo-action $(X,B,\rho,\varphi,\gamma)$  we consider the synthetic diagram (\ref{diag:synthetic semi-biproduct}). The operation (\ref{eq: semibiproduct sunthetic operation}) is well defined on the set 
 \[R_{\rho,\varphi,\gamma}=\left\lbrace\left(x^b,b\right)\mid x\in X,b\in B\right\rbrace\]
and it is associative by (\ref{eq: factor}). With neutral element $(1,0)$ we see that the set $R=R_{\rho,\varphi,\gamma}$ has a monoid structure. Clearly, the maps $\langle 1,0\rangle\colon{X\to R}$ and $\pi_B\colon{R\to B}$ are monoid homomorphisms while $\langle 0,1\rangle\colon{B\to R}$ and $\pi_X\colon{R\to X}$ are zero-preserving maps. Furthermore, we have $\pi_B\langle 0,1\rangle=1_B$, $\pi_X\langle 1,0\rangle=1_X$, $\pi_B\langle 1,0\rangle=0$ and $\pi_X\langle 0,1\rangle=0$. Finally, we observe that for every $(x,b)\in X\times B$, from (\ref{eq: semibiproduct sunthetic operation}) we have
\[(x,b)=(x,1)+(0,b)=(x^b,b)\] which means that if $(x,b)\in R$ then $(x,b)=(x,1)+(0,b)$. This shows that the synthetic construction is a semi-biproduct.

From equation (\ref{eq: homomprhism conditon}) we see that the map $h(x,b)=(f(x),g(b))$ is a monoid homomorphism. Since it is compatible with the respective semi-biproduct synthetic structure maps, we conclude that it is a morphism of semi-biproducts.
\end{proof}

In order to prove an equivalence of categories between the category of semi-biproducts  and the category of pseudo-actions, we observe that if starting with a structure $(x^b,b\cdot x,b\times b')$ for a pseudo-action of $X$ on $B$, constructing its associated synthetic semi-biproduct and then extracting a new pseudo-action out of it, we obtain the structure $$(x^b,(b\cdot x)^b,(b\times b')^{bb'}).$$ The role of equations (\ref{eq: b.x^b}) and (\ref{eq: bxb'^bb'}) is to ensure that the new structure is in fact equal to the original one. On the other hand, if starting with a semi-biproduct of monoids, extracting its associated pseudo-action, and then reconstructing the synthetic semi-biproduct, we get a natural isomorphism of semi-biproducts.

\begin{theorem}\label{thm: equivalence}
There is an equivalence between the category of semi-biproducts of monoids and the category of pseudo-actions.
\end{theorem}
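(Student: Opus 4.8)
The plan is to assemble the two functors already produced and verify that they are quasi-inverse. Theorem \ref{thm: pseudo-actions} gives a functor $F$ from semi-biproducts to pseudo-actions, sending $(X,A,B,p,k,q,s)$ to the triple $(\rho,\varphi,\gamma)$ with $x^b=q(k(x)+s(b))$, $b\cdot x=q(s(b)+k(x))$, $b\times b'=q(s(b)+s(b'))$, and a morphism $(f_1,f_2,f_3)$ to $(f_1,f_3)$. Theorem \ref{thm: pseudo-actions to semi-biproducts} gives a functor $G$ in the opposite direction, sending $(X,B,\rho,\varphi,\gamma)$ to the synthetic diagram on $R_{\rho,\varphi,\gamma}$ and a morphism $(f,g)$ to $(f,h,g)$ with $h(x,b)=(f(x),g(b))$. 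I would show that $F\circ G$ is the identity on pseudo-actions and that $G\circ F$ is naturally isomorphic to the identity on semi-biproducts; by the standard criterion this yields the asserted equivalence.

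For $F\circ G$, I would run the synthetic construction of $G$ and then read off the three components extracted by $F$. Direct evaluation of the operation $(\ref{eq: semibiproduct sunthetic operation})$ gives $\pi_X(\langle 1,0\rangle x+\langle 0,1\rangle b)=x^b$, $\pi_X(\langle 0,1\rangle b+\langle 1,0\rangle x)=(b\cdot x)^b$ and $\pi_X(\langle 0,1\rangle b+\langle 0,1\rangle b')=(b\times b')^{bb'}$, so the recovered triple is $(x^b,(b\cdot x)^b,(b\times b')^{bb'})$. Here the two compatibility conditions $(\ref{eq: b.x^b})$ and $(\ref{eq: bxb'^bb'})$ are exactly what is needed to collapse this back to $(x^b,b\cdot x,b\times b')$, so $F\circ G$ is the identity on objects; on morphisms both functors act by $(f,g)\mapsto(f,g)$, so $F\circ G$ is literally the identity functor.

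For $G\circ F$, I would exhibit the natural isomorphism through the map $\beta$ of Theorem \ref{thm: semi-biproducts}. Fixing a semi-biproduct and forming $R_{\rho,\varphi,\gamma}$ from its extracted pseudo-action, Theorem \ref{thm: semi-biproducts} already tells me that $\beta(a)=(q(a),p(a))$ is a bijection $A\to R_{\rho,\varphi,\gamma}$ with inverse $\alpha(x^b,b)=k(x)+s(b)$; it remains to check that $\beta$ is a morphism of semi-biproducts. Compatibility with the structure maps is immediate from the axioms: $\beta k=\langle 1,0\rangle$ follows from $qk=1_X$ and $pk=0_{X,B}$, $\beta s=\langle 0,1\rangle$ from $qs=0_{B,X}$ and $ps=1_B$, while $\pi_X\beta=q$ and $\pi_B\beta=p$ hold by definition. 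That $\beta$ preserves the operation reduces to comparing first coordinates: writing $u=q(a)+p(a)\cdot q(a')+(p(a)\times p(a'))$ and $v=p(a+a')$, equation $(\ref{eq: kuv+sv})$ gives $a+a'=k(u^v)+s(v)$, whence $q(a+a')=(u^v)^v$, and the idempotency $(x^b)^b=x^b$ noted in the proof of Theorem \ref{thm: pseudo-actions} identifies this with $u^v$, which is precisely the first coordinate of $\beta(a)+\beta(a')$. Naturality of $\beta$ then follows from the morphism compatibilities $q'f_2=f_1q$ and $p'f_2=f_3p$, which make the square with $h(x,b)=(f_1(x),f_3(b))$ commute, the $X$- and $B$-legs of the isomorphism being identities.

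The verifications that $\beta$ respects each structure map and the naturality bookkeeping are routine, being forced by the defining equations of a semi-biproduct. I expect the two genuinely load-bearing steps to be, first, that $q(a+a')=u^v$ rather than merely $(u^v)^v$, which is where the correction-system idempotency is essential, and second, that $F\circ G$ returns the original pseudo-action on the nose, which is exactly the purpose of conditions $(\ref{eq: b.x^b})$ and $(\ref{eq: bxb'^bb'})$. The main subtlety is therefore not any single long calculation but ensuring that these compatibility conditions make the round trip $F\circ G$ strictly the identity, so that the two functors witness a genuine equivalence; absent them the recovered pre-action and factor system would appear in their $\rho$-corrected forms $(b\cdot x)^b$ and $(b\times b')^{bb'}$.
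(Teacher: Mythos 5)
Your proposal is correct and follows essentially the same route as the paper: the same two functors, the same natural isomorphism $\alpha,\beta$ from Theorem \ref{thm: semi-biproducts}, and the same use of conditions (\ref{eq: b.x^b}) and (\ref{eq: bxb'^bb'}) to make the round trip on pseudo-actions strictly the identity. The only cosmetic difference is that you obtain $q(a+a')=u^v$ via equation (\ref{eq: kuv+sv}) together with the idempotency $(x^b)^b=x^b$, whereas the paper reads it off directly from (\ref{eq: a+a'}); both are valid.
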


\begin{proof}
Let us start with a semi-biproduct $(X,A,B,p,k,q,s)$ and let $(X,B,\rho,\varphi,\gamma)$ be its associated pseudo-action. The synthetic semi-biproduct constructed from the pseudo-action is isomorphic to the original semi-biproduct as illustrated
\begin{equation}
\xymatrix{X\ar@<-.5ex>[r]_{k}\ar@{=}[d] & A\ar@{->}@<.5ex>[d]^{\beta}\ar@<-.5ex>@{-->}[l]_{q}\ar@<.5ex>[r]^{p} & B\ar@{=}[d] \ar@{->}@<.5ex>@{-->}[l]^{s}\\
X\ar@<-.5ex>[r]_(.4){\langle 1,0 \rangle} & R_{\rho,\varphi,\gamma}\ar@{->}@<.5ex>[u]^{\alpha} \ar@<-.5ex>@{-->}[l]_(.6){\pi_{X}}\ar@<.5ex>[r]^(.6){\pi_B} & B \ar@{->}@<.5ex>@{-->}[l]^(.4){{\langle 0 ,1 \rangle}}}
\end{equation}
with the morphisms  $\alpha$ and $\beta$ defined as in Theorem \ref{thm: semi-biproducts}. We have to show that $\alpha$ and $\beta$ are monoid homomorphisms. It is immediate to see that they are compatible with the semi-biproduct structuring maps, that is
\begin{eqnarray*}
p\alpha(x,b)=p(kx+sb)=b,\\
q\alpha(x,b)=q(kx+sb)=x^b=x\quad, (x,b)\in R\\
\alpha(0,b)=s(b),\quad \alpha(x,1)=k(x)\\
\pi_B\beta(a)=p(a),\quad \pi_X\beta(a)=q(a)\\
\beta s(b)=(0,b),\quad \beta k(x)=(x,1).
\end{eqnarray*} 
In order to prove that $\alpha$ is a homomorphism we observe
\[\alpha((x,b)+(x',b'))=k((x+b\cdot x'+b\times b')^{bb'})+s(bb')\]
and
\begin{eqnarray*}
\alpha(x,b)+\alpha(x',b')&=&kx+sb+kx'+sb'\\
&=&kx+k(b\cdot x')+sb+sb'\\
&=&kx+k(b\cdot x')+k(b\times b')+s(bb')\\
&=&k(x+b\cdot x'+b\times b')+s(bb')\\
&=&k((x+b\cdot x'+b\times b')^{bb'})+s(bb').
\end{eqnarray*}
In order to prove that $\beta$ is a homomorphism we observe that $\beta(a+a')=(q(a+a'),p(a+a'))$ while $\beta(a)+\beta(a')=(q(a),p(a))+(q(a'),p(a'))$. This means that $\beta$ is a homomorphism as soon as
\begin{equation}
q(a+a')=(q(a)+p(a)\cdot q(a')+(p(a)\times p(a')))^{p(a+a')}
\end{equation}
which follows from (\ref{eq: a+a'}).

To see that $\alpha$ and $\beta$ define a natural isomorphism between a semi-biproduct and its associated synthetic one, let us consider the diagram
\begin{equation}
\xymatrix{A\ar[r]^{f_2}\ar@<.5ex>[d]^{\beta} & A'\ar@<.5ex>[d]^{\beta'}\\R\ar@<.5ex>[u]^{\alpha}\ar[r]^{h}&R' \ar@<.5ex>[u]^{\alpha'}}
\end{equation}
in which $f_2$ is part of a morphism $(f_1,f_2,f_3)$ between semi-biproducts and $h$ is the  induced morphism between respective associated synthetic semi-biproducts, that is, $h(x,b)=(f_1(x),f_3(b))$. We observe
\[h\beta(a)=(f_1q(a),f_3p(a))=(q'f_2(a),p'f_2(a))=\beta'f_2(a)\]
and
\begin{eqnarray*}
f_2\alpha(x,b)&=&f_2(kx+sb)=f_2k(x)+f_2s(b)\\
&=&k'f_1(s)+s'f_3(b)=\alpha'h(x,b).
\end{eqnarray*}

If we start with a pseudo-action $(X,B,\varphi,\rho,\gamma)$, build its associated synthetic semi-biproduct, and extract the pseudo-action associated to it, then a new pseudo-action is obtained, let us call it $(X,B,\rho',\varphi',\gamma')$. The new pseudo-action is $\rho'(x,b)=x^b$, $\varphi'(b,x)=(b\cdot x)^b$ and $\gamma'(b,b')=(b\times b')^{bb'}$.  The two conditions (\ref{eq: b.x^b}) and (\ref{eq: bxb'^bb'}) ensure that the new pseudo-action and the original one are the same.
\end{proof}

\section{Examples}\label{sec: eg}

Every Schreier split extension (\cite{NMF14}) of a monoid $X$ by a monoid $B$  is a semi-biproduct of monoids. In this case $b\times b'=0$ and $x^b=x$ for all $x\in X$, $b,b'\in B$. More generally, every Schreier extension of monoids is a semi-biproduct in which $x^b=x$. In fact, a semi-biproduct of monoids $(X,A,B,p,k,q,s)$ is a Schreier extension if and only if $x^b=x$ for all $x\in X$ and $b\in B$.

Here is a general procedure for the construction of semi-biproducts which are not necessarily Schreier extensions. The procedure is obtained from a pseudo-action $(X,B,\rho,\varphi,\gamma)$ by isolating the subset $$\{(\rho(x,b),b)\mid x\in X,b\in B\}\subseteq X\times B$$ and then working backwards in order to recover the rest of the structure while assuming that the map $s$ is of the form $s(b)=(u(b),b)$ for some map $u$ from $B$ to $X$.

Let $X$ (written additively) and $B$ (written multiplicatively) be two monoids and let us suppose the existence of a subset $R\subseteq X\times B$, considered as a binary relation, so that we write $xRb$ in the place of $(x,b)\in R$, together with two maps $u\colon{B\to X}$ and $q\colon{R\to X}$ satisfying the following conditions:
\begin{eqnarray}
\text{ for all $x\in X$, $xR1$ and $q(x,1)=x$},\\
\text{ for all $b\in B$, $u(b)Rb$ and $q(u(b),b)=0$},\\
\text{ if $xRb$, $yRb$ and $q(x,b)=q(y,b)$ then $x=y$.}\label{item:q}
\end{eqnarray}
For every monoid structure on the set $R$, for which $(0,1)$ is the neutral element and the projection map $(x,b)\mapsto b$ is a homomorphism, we put:
\begin{eqnarray*}
x\oplus x'&=&q((x,1)+(x',1))\\
b\times b'&=&q((u(b),b)+(u(b'),b'))\\
b\cdot x&=&q((u(b),b)+(x,1))\\
x^b&=&q((x,1)+(u(b),b)).
\end{eqnarray*}
If $x\oplus x'=x+x'$ for all $x,x'\in X$ (where $+$ is the monoid operation on $X$) and if $q(x,b)^b=q(x,b)$, whenever $xRb$, then we have a semi-biproduct
\begin{equation}
\xymatrix{X\ar@<-.5ex>[r]_{k} & R\ar@<-.5ex>@{-->}[l]_{q}\ar@<.5ex>[r]^{p} & B \ar@{-->}@<.5ex>[l]^{s}}
\end{equation}
with $p(x,b)=b$, $k(x)=(x,1)$, $s(b)=(u(b),b)$.
 The monoid operation on $R$ is given by the formula
\begin{equation}
(x,b)+(x',b')=((x+b\cdot x'+ b\times b')^{bb'},bb').
\end{equation}
Indeed, $k$ is a homomorphism because $\oplus$ is the monoid operation defined on $X$, $p$ is a homomorphism by assumption, $pk=0$, $ps=1$, $qk=1$, $qs=0$ by construction  and we have, for every $xRb$,
\begin{equation}\label{eq:xRb}
(x,b)=(q(x,b),1)+(u(b),b)
\end{equation}
which means that $kq+sp=1_R$. It is not difficult to see that $(\ref{eq:xRb})$ follows from the fact that $q(x,b)^b=q(x,b)$, together with the fact that by condition $(\ref{item:q})$ the map $\langle q,p\rangle\colon{R\to X\times B}$ is injective. 
In particular, when we force $q(x,b)=x$ and $u(b)=0$ then we are reduced to the analysis of monoid structures on $R$. In that case, the condition $q(x,b)^b=q(x,b)$ becomes $x^b=x$ whenever $xRb$, and if we are looking for semi-biproducts which are not Schreier extensions then we have to consider relations $R$ which are not in bijection with the set $X\times B$. Let us work a concrete example.

Take $X=\{0,s\}$ with $s+s=s$ and $B=\{1,t\}$ with $t^2=t$. Clearly, $R=\{(0,1),(s,1),(0,t)\}$ is the only possible proper subset of $X\times B$ with $xR1$ and $0Rb$ for every $x\in X$ and $b\in B$. There are two possible solutions to turn $R$ into a monoid with $0R1$ as neutral element and such that the map $p(x,b)=b$ is a homomorphism.
\begin{enumerate}
\item One possibility is to map $0R1\mapsto 1$, $sR1\mapsto -1$, $0Rt\mapsto 0$ and take the usual multiplication on the set $\{-1,0,1\}$. However, in this case we find that $s\oplus s=0$ while $s+s=s$. So, it does not give a semi-biproduct.

\item Another possibility is to consider the chain semilattice structure on $R$ as follows
\begin{equation*}
\begin{tabular}{c|ccc}
+ & $0R1$ & $sR1$ & $0Rt$\\
\hline
$0R1$ & $0R1$ & $sR1$ & $0Rt$\\
$sR1$ & $sR1$ & $sR1$ & $0Rt$\\
$0Rt$ & $0Rt$ & $0Rt$ & $0Rt$
\end{tabular}
\end{equation*}
In this case we have
\begin{equation*}
\begin{tabular}{c|c|c|c|c|c|c|c}
$x$ & $b$ & $x'$ & $b'$ & $x\oplus x'$ & $b\times b'$ & $b\cdot x$ & $x^b$\\
\hline
$0$ & $1$ & $0$ & $1$ & $0$ & $0$ & $0$ & $0$ \\
$0$ & $t$ & $s$ & $1$ & $s$ & $0$ & $0$ & $0$ \\
$s$ & $1$ & $0$ & $t$ & $s$ & $0$ & $s$ & $s$ \\
$s$ & $t$ & $s$ & $t$ & $s$ & $0$ & $0$ & $0$ 
\end{tabular}
\end{equation*}
and since $x\oplus x'=x+x'$ and $x^b=x$ whenever $xRb$ (note that $s^t=0$ but $(s,t)\notin R$) we obtain a semi-biproduct of monoids 
\begin{equation}
\xymatrix{\{0,s\}\ar@<-.5ex>[r]_{\iota_1} & R\ar@<-.5ex>@{-->}[l]_{\pi_1}\ar@<.5ex>[r]^{\pi_2} & \{1,t\} \ar@{->}@<.5ex>[l]^{\iota_2}}
\end{equation}
with $\pi_2(x,b)=b$, $\iota_2(b)=(0,b)$, $\pi_1(x,b)=x$, $\iota_1(x)=(x,1)$. It cannot be a Schreier extension because $R$ has three elements while $X\times B$ has four elements. Note that this example is the same as (\ref{eg:simple eg}), from Introduction.
\end{enumerate}

If we now consider the case when $X=\{0,s\}$ with $s+s=0$, while keeping $B=\{1,t\}$ with $t^2=t$, then we observe that the same mapping $0R1\mapsto 1$, $sR1\mapsto -1$, $0Rt\mapsto 0$ as before, with the usual multiplication on the set $\{-1,0,1\}$, gives a semi-biproduct
\begin{equation}
\xymatrix{X\ar@<-.5ex>[r]_(.35){k} & \{1,-1,0\}\ar@<-.5ex>@{-->}[l]_(.65){q}\ar@<.5ex>[r]^(.65){p} & B \ar@{-->}@<.5ex>[l]^(.35){s}}
\end{equation}
if we let $X\cong\{1,-1\}$ and $B\cong\{1,0\}$ have the structure of the usual multiplication while $p,k,q,s$ are obvious inclusions and retractions. Thus we see that even when $X$ is a group (but not $B$), we can have an extension which is not a Schreier extension.

In the case when $B$ is a group, say $B=\{1,b\}$ with $b^2=1$, we may still produce one example of a semi-biproduct  as soon as we consider $X=\{1,a\}$ and $A=\{1,a,b\}$ (compare with diagram (\ref{eg:simple eg})) with $a^2=b^2=a$ and $ab=ba=b$ (note that in this case $X$ does not admit cancellation, otherwise every semi-biproduct would be a Schreier extension as observed in  Remark~\ref{remark}, item \ref{item2}.

A simple example that illustrates the case when  $R$ is in bijection with $X\times B$, that is when $x^b=x$, is the following. Put $X=B=\mathbb{N}$ the additive monoid of natural numbers and consider the order relation $R=\{(x,b)\in \mathbb{N}^2\mid x\geq b\}$ together with the two maps $q(x\geq b)=x-b$ and $u(b)=b$. The usual component-wise addition on $R$ verifies all the conditions specified by the  construction scheme outlined above and hence it gives rise to a semi-biproduct. Namely, $(\mathbb{N},R,\mathbb{N},p,k,q,s)$ with $p(x\geq b)=b$, $k(x)=(x\geq 0)$, $q(x\geq b)=x-b$ and $s(b)=(b\geq b)$. In this case $q$ and $s$ are both homomorphisms and hence $x^b=x$ for every $(x,y)\in \mathbb{N}\times \mathbb{N}$ so that we obtain an isomorphism
\[
\xymatrix{\mathbb{N}\ar@{=}[d]_{}\ar@<-.5ex>[r]_{k} & R\ar@<-.5ex>[d]_{\beta}\ar@<-.5ex>@{->}[l]_{q}\ar@<.5ex>[r]^{p} & \mathbb{N}\ar@{=}[d]^{} \ar@{->}@<.5ex>[l]^{s}\\
\mathbb{N}\ar@<-.5ex>[r]_{\langle 1,0\rangle} & \mathbb{N}\times \mathbb{N}\ar@<-.5ex>[u]_{\alpha}\ar@<-.5ex>@{->}[l]_{\pi_1}\ar@<.5ex>[r]^{\pi_2} & \mathbb{N} \ar@{->}@<.5ex>[l]^{\langle 0,1\rangle}}
\]
with $\beta(x\geq b)=(x-b,b)$ and $\alpha(x,b)=(x+b\geq b)$. So we see that the usual order relation on the natural numbers is part of a semi-biproduct of monoids whose associated synthetic semi-biproduct is the familiar biproduct on the natural numbers.

\section{The underlying extension}\label{sec:ext}

 In this section we observe some slight adaptations of well-known results suggesting that semi-biproducts can be studied in the light of $S$-protomodular categories \cite{DB.NMF.AM.MS.16}.
A semi-biproduct $(X,A,B,p,k,q,s)$ can be seen as  an extension of monoids $X\to A\to B$ equipped with a section $s$ and a retraction $q$. Moreover, such extensions are pullback stable and the pair $(k,s)$ is jointly epimorphic in the sense that every monoid homomorphism $g\colon{A\to Y}$, from $A$ into a monoid $Y$, is completely determined by the homomorphism $gk\colon{X\to Y}$ and the map $gs\colon{B\to Y}$, as $g(a)=gkq(a)+gsp(a)$, for every $a\in A$. Similarly, the pair $(q,p)$ is jointly monomorphic in the sense that a homomorphism $f\colon{Z\to A}$, from a monoid $Z$ into the monoid $A$, is completely determined by the homomorphism $pf\colon{Z\to B}$ and the map $qf\colon{Z\to X}$, as $f(z)=kqf(z)+spf(z)$, for every $z\in Z$.

\begin{proposition}\label{thm:kernel of p}
Let $(X,A,B,p,k,q,s)$ be a semi-biproduct of monoids. The morphism $k$ is the kernel of the morphism $p$.
\end{proposition}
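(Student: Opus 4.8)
The plan is to show that $k$ satisfies the universal property of the kernel of $p$ in the category of monoids. Recall that the kernel of a monoid homomorphism $p\colon A\to B$ is the submonoid $\Ker(p)=\{a\in A\mid p(a)=0_B\}$ together with its inclusion, and that $k$ being ``the kernel of $p$'' means there is an isomorphism between $X$ and $\Ker(p)$ compatible with $k$ and the inclusion, or equivalently that $(X,k)$ enjoys the usual universal property: $pk=0$, and for every monoid homomorphism $g\colon Z\to A$ with $pg=0_{Z,B}$ there is a unique homomorphism $\bar g\colon Z\to X$ with $k\bar g=g$.

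First I would record what is already available. From the defining equations of a semi-biproduct we have $pk=0_{X,B}$, $qk=1_X$, $kq+sp=1_A$, and $ps=1_B$. The equation $pk=0$ shows $k$ factors through $\Ker(p)$, so the image of $k$ lies in $\Ker(p)$. Moreover $qk=1_X$ shows $k$ is a split monomorphism, hence injective; so $X$ maps isomorphically onto its image inside $A$. The content of the proposition is therefore to pin down this image as exactly $\Ker(p)$ and to verify the universal factorization.

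Next I would verify the universal property directly. Suppose $g\colon Z\to A$ is a monoid homomorphism with $pg=0_{Z,B}$. Define $\bar g = qg\colon Z\to X$; note $\bar g$ is a priori only a zero-preserving map since $q$ need not be a homomorphism, so establishing that $\bar g$ is a homomorphism is a genuine point to check. The key computation is $k\bar g = kqg$, and using $kq+sp=1_A$ together with $pg=0$ one gets $kqg = (1_A - \ldots)$ more carefully: for each $z$, $g(z)=kq(g(z))+sp(g(z))=kqg(z)+s(0)=kqg(z)$, since $sp(g(z))=s(pg(z))=s(0_B)=0_A$ as $s$ is zero-preserving. Hence $k\bar g=g$, giving existence of the factorization. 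For uniqueness, if $k\bar g'=g$ as well, apply $q$ and use $qk=1_X$ to obtain $\bar g'=q k\bar g' = qg=\bar g$. Finally, since $g$ is a homomorphism and $g=k\bar g$ with $k$ an injective homomorphism, $\bar g$ inherits being a homomorphism: $k\bar g(z+z')=g(z+z')=g(z)+g(z')=k\bar g(z)+k\bar g(z')=k(\bar g(z)+\bar g(z'))$, and injectivity of $k$ yields $\bar g(z+z')=\bar g(z)+\bar g(z')$.

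The main obstacle, and the step deserving the most care, is checking that the image of $k$ is \emph{all} of $\Ker(p)$ rather than merely contained in it; equivalently, that the factorization $\bar g$ lands in $X$ with $k\bar g=g$ for the canonical choice $g=$ inclusion of $\Ker(p)$. This is exactly where one uses $kq+sp=1_A$ and the zero-preservation of $s$: for any $a\in\Ker(p)$ one has $a=kq(a)+sp(a)=kq(a)+s(0_B)=kq(a)=k(q(a))$, exhibiting $a$ as lying in the image of $k$. Thus every element of $\Ker(p)$ is $k$ of something, and combined with injectivity of $k$ and $pk=0$ this identifies $k$ with the kernel inclusion up to the isomorphism $X\cong\Ker(p)$. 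I would present the argument in the order: (i) $pk=0$ and $k$ injective; (ii) the factorization $a=k(q(a))$ for $a\in\Ker(p)$; (iii) existence, homomorphism property, and uniqueness of $\bar g$; concluding that $(X,k)$ is the kernel of $p$.
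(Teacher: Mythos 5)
Your proof is correct and takes essentially the same route as the paper: both verify the universal property of the kernel by setting $\bar g = qg$, using $kq+sp=1_A$ together with $spg(z)=s(0_B)=0_A$ to get $k\bar g=g$, and $qk=1_X$ for uniqueness. The only cosmetic difference is that the paper establishes that $\bar g$ is a homomorphism by computing $q(g(z)+g(z'))$ directly, whereas you deduce it from the injectivity of $k$; both arguments rest on the same identities.
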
 
\begin{proof}
Let $f\colon{Z\to A}$ be a morphism such that $pf=0$. Then the map $\bar{f}=qf$ is a homomorphism
\begin{eqnarray*}
qf(z+z')&=&q(fz+fz')=q(kqf(z)+spf(z)+kqf(z')+spf(z'))\\
&=& q(kqf(z)+0+kqf(z')+0)\\
&=& qk(qf(z)+qf(z'))=qf(z)+qf(z')
\end{eqnarray*}
and it is unique with the property $k\bar{f}=f$. Indeed, if $k\bar{f}=f$ then $qk\bar{f}=qf$ and hence $\bar{f}=qf$.
\end{proof}

\begin{proposition}\label{thm:cokernel of k}
Let $(X,A,B,p,k,q,s)$ be a semi-biproduct of monoids. The morphism $p$ is the cokernel of the morphism $k$.
\end{proposition}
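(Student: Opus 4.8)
The plan is to verify directly that $p$ satisfies the universal property of the cokernel of $k$. In the pointed category of monoids the cokernel of $k$ is the coequalizer of $k$ and the zero morphism $0_{X,A}$, so two things must be checked: first that $pk=0_{X,B}$, which is already one of the defining conditions of a semi-biproduct, and second that every homomorphism $g\colon{A\to Y}$ satisfying $gk=0_{X,Y}$ factors as $g=\bar g\, p$ for a unique homomorphism $\bar g\colon{B\to Y}$. This is the exact dual of the kernel argument in Proposition~\ref{thm:kernel of p}, where the factorization was produced using the retraction $q$; here the natural candidate is $\bar g=gs$, built from the section $s$.

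The main step, and the only one requiring genuine care, is to show that $\bar g=gs$ is a monoid homomorphism, which is not automatic since $s$ is merely zero-preserving. I would start from the identity $s(b)+s(b')=k(b\times b')+s(bb')$, obtained by applying $a=kq(a)+sp(a)$ to the element $a=s(b)+s(b')$ and using $ps=1_B$ together with the fact that $p$ is a homomorphism, so that $p(s(b)+s(b'))=bb'$. Applying $g$ to both sides and invoking $gk=0_{X,Y}$ annihilates the correction term $k(b\times b')$, leaving $gs(b)+gs(b')=gs(bb')$. Preservation of the unit is immediate, since both $s$ and $g$ send the neutral element to the neutral element. Hence $\bar g$ is a homomorphism.

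It then remains to check the factorization and its uniqueness, both of which are short diagram chases. For the factorization I would use the jointly epimorphic identity $g(a)=gkq(a)+gsp(a)$ recorded just before the statement: since $gk=0_{X,Y}$ this collapses to $g(a)=gsp(a)=\bar g\, p(a)$, giving $g=\bar g\, p$. For uniqueness, any homomorphism $\bar g'$ with $\bar g'p=g$ must satisfy $\bar g'=\bar g' p s=gs=\bar g$, because $ps=1_B$. Together with $pk=0_{X,B}$ this establishes the universal property, so $p$ is the cokernel of $k$. I expect no obstacle beyond the multiplicativity of $gs$ treated above, the rest of the argument being formally dual to the kernel computation already carried out.
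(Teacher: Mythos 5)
Your proposal is correct and follows essentially the same route as the paper: the candidate $\bar g=gs$, the factorization $g=gkq+gsp=gsp$ coming from $kq+sp=1_A$, and uniqueness from $ps=1_B$. The only (equivalent) variation is in proving multiplicativity of $gs$: you apply $g$ to the identity $s(b)+s(b')=k(b\times b')+s(bb')$ and kill the correction term with $gk=0$, whereas the paper first establishes $g=gsp$ and computes $gs(bb')=gsp(s(b)+s(b'))=g(s(b)+s(b'))$ --- both reduce to the same use of $kq+sp=1_A$.
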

\begin{proof}
Let $g\colon{A\to Y}$ be a morphism and suppose that $gk=0$. It follows that $g=gsp$,
\begin{equation*}
g=g1_A=g(kq+sp)=gkq+gsp=0+gsp=gsp,
\end{equation*}
and consequently the map $\bar{g}=gs$ is a homomorphism,
\begin{eqnarray*}
gs(bb')=gs(ps(b)ps(b'))=gsp(sb+sb')=g(sb+sb')=gs(b)+gs(b').
\end{eqnarray*}
The fact that $\bar{g}=gs$ is the unique morphism with the property $\bar{g}p=g$ follows from $\bar{g}ps=gs$ which is the same as $\bar{g}=gs$.
\end{proof}

\begin{proposition}\label{thm:stable under pullback}Semi-biproducts of monoids are stable under pullback.
\end{proposition}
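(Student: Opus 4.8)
The plan is to regard the semi-biproduct as its underlying split epimorphism $p\colon A\to B$ enriched with the kernel data $k$ and the retraction $q$, and to show that pulling this back along an arbitrary monoid homomorphism $g\colon B'\to B$ transports the whole structure. Concretely, I would form the pullback of $p$ along $g$ in the category of monoids, computed as the submonoid
\[
A'=\{(a,b')\in A\times B'\mid p(a)=g(b')\}
\]
of the product monoid $A\times B'$, with neutral element $(0,1)$ and componentwise operation $(a,b')+(a_1,b_1')=(a+a_1,b'b_1')$; the second projection $p'\colon A'\to B'$ is the pulled-back epimorphism and $\pi_A\colon A'\to A$ the comparison map. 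One first checks that $A'$ is a submonoid: closure uses that $p$ is a homomorphism, since $p(a+a_1)=p(a)p(a_1)=g(b')g(b_1')=g(b'b_1')$, and $(0,1)\in A'$ because $p(0)=1=g(1)$.

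Next I would equip $A'$ with the candidate structure maps
\[
k'(x)=(k(x),1),\qquad s'(b')=(sg(b'),b'),\qquad q'(a,b')=q(a).
\]
Here $s'$ is well defined because $p(sg(b'))=g(b')$ follows from $ps=1_B$, and it is exactly the section produced by the universal property of the pullback. It is routine that $p'$ and $k'$ are monoid homomorphisms, while $s'$ and $q'$ are zero-preserving: these follow from $k$ being a homomorphism, from $g$, $s$, $q$ preserving the neutral element, and from the identities $qs=0$ and $qk=1_X$.

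Then I would verify the five defining identities. The conditions $p's'=1_{B'}$, $q'k'=1_X$, $p'k'=0$ and $q's'=0$ reduce immediately to their counterparts $ps=1_B$, $qk=1_X$, $pk=0$ and $qs=0$ for the original data. The only identity requiring care --- and the step I expect to be the main obstacle --- is $k'q'+s'p'=1_{A'}$: for $(a,b')\in A'$ one computes
\[
k'q'(a,b')+s'p'(a,b')=\bigl(k(q(a))+s(g(b')),\,b'\bigr),
\]
and here the defining equation of the pullback, $g(b')=p(a)$, is precisely what converts $s(g(b'))$ into $s(p(a))$, so that the first coordinate becomes $(kq+sp)(a)=a$ by the third identity for $(X,A,B,p,k,q,s)$. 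Hence $(X,A',B',p',k',q',s')$ is a semi-biproduct, with kernel $\Ker(p')=\{(k(x),1)\mid x\in X\}\cong X$.

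Finally, to phrase this as genuine stability under pullback, I would observe that the triple $(1_X,\pi_A,g)$ is a morphism of semi-biproducts in the sense of diagram (\ref{diag:morphism of semi-biproduct}): the four compatibility conditions $\pi_A k'=k$, $p\pi_A=gp'$, $\pi_A s'=sg$ and $q\pi_A=q'$ all follow directly from the definitions of $k'$, $s'$, $q'$ above and from the pullback relation $p(a)=g(b')$. The square formed by $p$, $g$, $p'$ and $\pi_A$ is then a pullback square whose total diagram is a semi-biproduct, which is the assertion. The entire substance of the argument sits in the verification of the identity $k'q'+s'p'=1_{A'}$, the one place where the pullback compatibility is genuinely used; everything else is a componentwise reduction to the corresponding property of the given semi-biproduct.
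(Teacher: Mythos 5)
Your proposal is correct and follows essentially the same route as the paper: form the pullback $A\times_B B'$, equip it with the induced kernel $\langle k,1\rangle$, section $\langle sg,1\rangle$ and retraction $q\pi_A$, reduce the four easy identities to their originals, and use the pullback relation $p(a)=g(b')$ together with $a=kq(a)+sp(a)$ to establish $k'q'+s'p'=1_{A'}$. The extra details you supply (closure of the submonoid, the morphism $(1_X,\pi_A,g)$ of semi-biproducts) are fine and merely make explicit what the paper leaves as "by construction."
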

\begin{proof}
Let $(X,A,B,p,k,q,s)$ be a semi-biproduct of monoids displayed as the bottom row in the following diagram which is obtained by taking the pullback of $p$ along an arbitrary morphism $h\colon{C\to B}$, with induced morphism $\langle k,0 \rangle$ and map $\langle sh,1 \rangle$,
\begin{eqnarray}
\xymatrix{X\ar@<-.5ex>[r]_(.35){\langle k,0 \rangle}\ar@{=}[d]_{} & A\times_B C\ar@{->}@<0ex>[d]^{\pi_1}\ar@<-.5ex>@{-->}[l]_(.6){q\pi_1}\ar@<.5ex>[r]^(.6){\pi_2} & C\ar@{->}[d]^{h} \ar@{->}@<.5ex>@{-->}[l]^(.35){\langle sh,1\rangle}\\
X\ar@<-.5ex>[r]_{k} & A \ar@<-.5ex>@{-->}[l]_{q}\ar@<.5ex>[r]^{p} & B \ar@{->}@<.5ex>@{-->}[l]^{s}.}
\end{eqnarray}
We have to show that the top row is a semi-biproduct of monois. By construction we have $\pi_2\langle sh,1\rangle=1_C$, $\pi_2\langle k,0\rangle=0$, $q\pi_1\langle sh,1\rangle=qsh=0$, $q\pi_1\langle k,0\rangle=qk=1_X$. It remains to prove the identity
\[(a,c)=(kq(a),0)+(sh(c),c)=(kq(a)+sh(c),c)\]
for every $a\in A$ and $c\in C$ with $p(a)=h(c)$, which follows from $a=kq(a)+sp(a)=kq(a)+sh(c)$.  
\end{proof}

The following list of examples illustrates some aspects of semi-biproducts when considered as an extension with specified section and retraction. Let us consider eight different diagrams displayed as
\begin{equation}
\xymatrix{X\ar@<-.5ex>[r]_{k} & A_i \ar@<-.5ex>@{-->}[l]_{q}\ar@<.5ex>[r]^{p} & B \ar@{-->}@<.5ex>[l]^{s}},\quad i=1,\ldots,8.
\end{equation}
Each monoid $A_i$ has underlying set $\{0,a,b,c,d\}$ and respective operation defined by the table
 \begin{eqnarray*}
 A_1=\left(\begin{array}{ccccc}
0 & a & b & c & d\\ 
a & a & b & c & d\\ 
b & b & b & c & d\\ 
c & c & d & d & d\\ 
d & d & d & d & d
\end{array}\right),
 & A_2=\left(\begin{array}{ccccc}
0 & a & b & c & d\\ 
a & a & b & c & d\\ 
b & b & b & c & d\\ 
c & c & c & d & c\\ 
d & d & d & c & d
\end{array}\right), \\
 A_3=\left(\begin{array}{ccccc}
0 & a & b & c & d\\ 
a & a & b & c & c\\ 
b & b & b & c & c\\ 
c & c & c & c & c\\ 
d & d & c & c & c
\end{array}\right),
 & A_4=\left(\begin{array}{ccccc}
0 & a & b & c & d\\ 
a & a & b & c & c\\ 
b & b & b & c & c\\ 
c & c & c & c & c\\ 
d & c & c & c & c
\end{array}\right)
 \end{eqnarray*}
and
 \begin{eqnarray*}
 A_5=\left(\begin{array}{ccccc}
0 & a & b & c & d\\ 
a & a & b & d & d\\ 
b & b & b & d & d\\ 
c & c & c & d & d\\ 
d & d & d & d & d
\end{array}\right),
 & A_6=\left(\begin{array}{ccccc}
0 & a & b & c & d\\ 
a & a & b & d & d\\ 
b & b & b & d & d\\ 
c & d & d & d & d\\ 
d & d & d & d & d
\end{array}\right), \\
 A_7=\left(\begin{array}{ccccc}
0 & a & b & c & d\\ 
a & a & b & d & d\\ 
b & b & b & d & d\\ 
c & c & c & c & c\\ 
d & d & d & d & d
\end{array}\right),
 & A_8=\left(\begin{array}{ccccc}
0 & a & b & c & d\\ 
a & a & b & d & d\\ 
b & b & b & d & d\\ 
c & d & d & c & d\\ 
d & d & d & d & d
\end{array}\right). 
 \end{eqnarray*}
The monoid $X$ is the three element linear chain (sup-semi-lattice) whereas the monoid $B$ is the two element linear chain. More specifically we consider $X$ as the set $\{0,a,b\}$ and $B$ as the set $\{0,c\}$ with operation tables
 \begin{eqnarray*}
 X=\left(\begin{array}{ccc}
0 & a & b \\ 
a & a & b  \\ 
b & b & b  
\end{array}\right),
 & B=\left(\begin{array}{cc}
0 & c\\ 
c & c 
\end{array}\right).
 \end{eqnarray*}
The given presentation for the monois $X$, $B$ and $A_i$ permits us to consider $k$ as the inclusion homomorphism, $s$ as the inclusion map, $p$ as the homomorphism defined by  $p^{-1}(0)=\{0,a,b\}$, $p^{-1}(c)=\{c,d\}$ and $q$ as the map defined by $q^{-1}(0)=\{0,c\}$, $q^{-1}(a)=\{a,d\}$, $q^{-1}(b)=\{b\}$, in all the eight cases. 

For each structure $(X,A_i,B,p,k,q,s)$, $i=1,\ldots,8$, we observe:
\begin{enumerate}
\item The cases 1 to 4 are not semi-biproducts because the element $d\in A$ cannot be obtained as $d=kq(d)+sp(d)$, indeed $kq(d)+sp(d)=a+c=c\neq d$.

\item The cases 5 to 8 are semi-biproducts of monoids. Indeed, we have $pk=0$, $ps=1_B$, $qk=1_X$, $qs=0$ and we have $0=kq(0)+sp(0)=0+0$, $a=kq(a)+sp(a)=a+0$, $b=kq(b)+sp(b)=b+0$, $c=kq(c)+sp(c)=0+c$ in all eight cases. However, the condition $d=kq(d)+sp(d)=a+c$ is satisfied in $A_5$ to $A_8$ but not in $A_1$ to $A_4$ as explained in the previous item. 

\item The inclusion map $s$ is a homomorphism in the cases 3, 4, 7 and 8 but not in the cases 1, 2, 5, 6.

\item The map $q$ is never a homomorphism. Indeed, in the cases 1 to 4 we have $q(b+c)=q(c)=0$ whereas $q(b)+q(c)=b+0=b$. In the cases 5 to 8 we have $q(b+c)=q(d)=a$ whereas $q(b)+q(c)=b+0=b$.

\item In the cases 2, 4, 6 and 8 the monoid $A_i$ is commutative whereas in the other cases it is not.

\item In all cases $k$ is the kernel of $p$ but for example in the cases 1 and 2 the pair $(q,p)$ is not jointly monomorphic in the sense above. Indeed, the inclusion homomorphism $f\colon{Z\to A_i}$ with $Z=\{0,d\}$ is not determined by $qf$ and $pf$ since
\[kqf(d)+spf(d)=kq(d)+sp(d)=a+c=c\neq d.\]

\item In all cases except 2 the morphism $p$ is the cokernel of $k$. In the cases 5 to 8 by Proposition \ref{thm:cokernel of k}, in the cases 1, 3 and 4 because $p$ is a split epimorphism (in case 1, $s$ is not a homomorphism but $s'(c)=d$ is). In the case 2 we see that the pair $(k,s)$ is not jointly epimorphic in the sense above. For example, the homomorphism $g\colon{A_2\to Y}$, with $Y=\{0,c,d\}$ obtained as $Y=A_2([1,4,5],[1,4,5])$, and defined by $g^{-1}(0)=\{0,a,b\}$, $g^{-1}(c)=\{c\}$, $g^{-1}(d)=\{d\}$ is not determined by $gk$ and $gs$. Indeed, $g(d)=d$ cannot be obtained as $gkq(d)=0$ and $gsp(d)=c$. In particular we observe that there is no $\bar{g}\colon{B\to Y}$ such that $g=\bar{g}p$.
\end{enumerate}

The previous propositions are simple adaptations of well-known results, especially when $s$ is a homomorphism. Moreover, since monoid monomorphisms are injective maps we observe further that each pair $(k,s)$ in a semi-biproduct is jointly extremal-epimorphic. Indeed, given any monomorphism $m\colon M\to A$ for which there exists a homomorphism $l\colon X\to M$ and a map $t\colon B\to M$ such that $ml=k$ and $mt=s$, we have $a=kq(a)+sp(a)=m(lq(a)+tp(a))$, and so $m$ is an isomorphism. Note that this argument is not dual because not every epimorphism is a surjective map.

\section{Conclusion}

We have introduced the notion of semi-biproduct of monoids. Considerations on categorical aspects related to this notion can be found in \cite{NMF.20a}. Several directions using different techniques are pointed out there. The main issue involves dealing with morphisms and maps on the same ground. One possibility is to consider maps as an extra structure in higher dimensions \cite{Brown,NMF.15}. Another one is to consider maps as imaginary morphisms \cite{BZ,MontoliRodeloLinden}. It seems to be relevant the investigation on both directions and to determine whether they lead to different settings or if rather there is a more general setting which unifies both directions.
Developing a general categorical framework in which to study semi-biproducts is desirable due to several interesting cases that occur in different contexts. For example, semi-biproducts can be studied in the context of preordered monoids (see \cite{NMF.20b}, see also \cite{Preord} for preordered groups) where the maps $q$ and $s$ can be required to be monotone maps rather than merely zero-preserving maps. The context of topological monoids \cite{Ganci} is also worthwhile studying since we can take the maps $q$ and $s$ to be continuous or not and possible variations on that.

\section*{Acknowledgements}
The author would like to thank the referee for a very careful reading of the text, which led to the current improved version; and the Editor for   helpful suggestions on Section~\ref{sec:ext}. 
This work was supported by Fundação para a Ciência e a Tecnologia (FCT UID-Multi-04044-2019), Centro2020 (PAMI -- ROTEIRO\-/0328\-/2013 -- 022158) and by the Polytechnic of Leiria through the projects CENTRO\--01\--0247: FEDER\--069665, FEDER\--069603, FEDER\--039958, FEDER\--039969, FEDER\--039863, FEDER\--024533.


\begin{thebibliography}{10}


\bibitem{BournJanelidze} D. Bourn and G. Janelidze , \emph{Protomodularity, descent, and semidirect products}, TAC 1998

\bibitem{BZ} D. Bourn and Z. Janelidze, \emph{Subtractive Categories and Extended Subtractions}, Appl Categor Struct 17, 317–343 (2009). https://doi.org/10.1007/s10485-008-9182-z

\bibitem{DB.NMF.AM.MS.13} D. Bourn, N. Martins-Ferreira, A. Montoli and M. Sobral, \emph{Schreier split epimorphisms in monoids and in semirings}, Textos de Matemática Série B \textbf{45} (Departamento de Matemática, Universidade de Coimbra (ISBN 978-972-8564-49-0)) (2013) vi+116pp.


\bibitem{NMF14} D. Bourn, N. Martins-Ferreira, A. Montoli, M. Sobral, \emph{Schreier split epimorphisms between monoids}, Semigroup Forum \textbf{88} (2014) 739--752.

\bibitem{DB.NMF.AM.MS.16} D. Bourn, N. Martins-Ferreira, A. Montoli and M. Sobral, \emph{Monoids and pointed S-protomodular categories}, Homology, Homotopy and Applications \textbf{48} (1) (2016) 151--172.


\bibitem{Brown} R. Brown, \emph{Possible connections between wiskered categories and groupoids, Leibniz algebras, automorphism structures and local-to-global questions}, Journal of Homotopy and Related Structures \textbf{1}(1) (2010) 1--13.

\bibitem{Preord} M. M. Clementino, N. Martins-ferreira and A. Montoli, \emph{On the categorical behaviour of preordered groups}, J. Pure Appl. Algebra \textbf{223} (2019) 4226--4245.

\bibitem{Faul} P. Faul, \emph{A characterization of weakly Schreier extensions of monoids}, J. Pure Appl. Algebra \textbf{225} (2)
(2021).

\bibitem{Fleischer} I. Fleischer, \emph{Monoid extension theory}, 
Journal of Pure and Applied Algebra \textbf{21} (1981) 15l--159. 

\bibitem{Ganci} J. J. Ganci, \emph{Schreier extensions of topological semigroups}, PhD, Louisiana State University (1975).

\bibitem{GranJanelidzeSobral} M. Gran, G. Janelidze and M. Sobral, \emph{Split extensions and semidirect products of unitary magmas}, arxiv: 1906.02310v2 (math.CT).


\bibitem{Leech} J. Leech, \emph{Extending groups by monoids}, Journal of Algebra \textbf{74} (1982) 1--19.


\bibitem{MacLane} S. Mac Lane, \emph{Categories for the Working Mathematician}, 2ed, Graduate Texts in Mathematics 5, Springer, 1998.

\bibitem{MacLane2} S. Mac Lane, \emph{Cohomology Theory in Abstract Groups. III} Annals of Mathematics, Second Series, \textbf{50}(3) (1949) 736--761.

\bibitem{NMF.15} N. Martins-Ferreira, \emph{On the notion of pseudocategory internal to a category with a 2-cell structure}, Tbil. Math. J. \textbf{8} (1) (2015)  107--141.

\bibitem{NMF.20a} N. Martins-Ferreira, \emph{Semi-biproducts in monoids}, arXiv: 2002.05985v1 (math.RA).


\bibitem{NMF.20b} N. Martins-Ferreira and M. Sobral, \emph{Schreier split extensions of preordered monoids}, Journal of Logical and Algebraic Methods in Programming \textbf{120} (2021).


\bibitem{NMF et all} N. Martins-Ferreira, A. Montoli, M. Sobral and A. Patchkoria, \emph{On the classification of Schreier extensions of monoids with non-abelian kernel}, to appear, Forum Mathematicum.

\bibitem{MontoliRodeloLinden} A. Montoli. D. Rodelo and T. Van der Linden, \emph{Intrinsic Schreier split extensions}, Appl. Categ. Structures (2019).


\bibitem{Wells} C. Wells, \emph{Extension theories for monoids}, Semigroup Forum \textbf{16} (1978) 13--35.
 

\end{thebibliography}

\end{document}